\newcommand{\private}[1]{}
\renewcommand\l@subsection{\@tocline{2}{0pt}{2pc}{5pc}{}}
\newcommand{\R}{{\mathbb R}}
\newcommand{\Emb}{\operatorname{Emb}}
\newcommand{\Imm}{\operatorname{Imm}}
\newcommand{\rImm}{\operatorname{rImm}}
\newcommand{\rImmce}{\rImm^{\mathrm{ce}}\!}
\newcommand{\CEmb}{\operatorname{CEmb}}
\newcommand{\smallrImm}{\rImm^{\mathrm{sm}}\!}
\newcommand{\affImm}{\rImm^{\mathrm{aff}}\!}
\newcommand{\rbar}{\overline{\rImm}}
\newcommand{\Conf}{\operatorname{Conf}}
\newcommand{\rConf}{\operatorname{rConf}}
\newcommand{\linear}{\operatorname{L}}
\newcommand{\inj}{\operatorname{Linj}}
\newcommand{\ev}{\operatorname{ev}}
\newcommand{\SafeDistance}{sd}
\newcommand{\radius}{R}
\newcommand{\lr}{LR}
\theoremstyle{plain}
\newtheorem{thm}{Theorem}[section]
\newtheorem{prop}[thm]{Proposition}
\newtheorem{lemma}[thm]{Lemma}
\newtheorem{cor}[thm]{Corollary}
\theoremstyle{definition}
\newtheorem{defin}[thm]{Definition}
\newtheorem{def/ex}[thm]{Definition/Example}
\theoremstyle{remark}
\newtheorem{rem}[thm]{Remark}
\begin{document}
\pagestyle{plain}
\title{The space of $r$-immersions of a union of discs in $\mathbb R^n$}

\author{Gregory Arone}
\address{Department of Mathematics, Stockholm University}
\email{gregory.arone@math.su.se}

\author{Franjo \v Sar\v cevi\'c}
\address{Department of Mathematics and Computer Science, University of Sarajevo}
\email{franjo.sarcevic@pmf.unsa.ba}
\urladdr{pmf.unsa.ba/franjos}

\subjclass[2020]{Primary: 57R42; Secondary: 55R80, 57R40}
\keywords{$r$-immersions, $r$-configuration spaces}
\thanks{\textbf{Acknowledgements.} F. \v Sar\v cevi\'c was partially supported by the grant P20\_01109 (JUNTA/FEDER, UE). The authors thank the anonymous referee for his/her careful reading and helpful comments that improved the quality of the manuscript.} 

\begin{abstract}
For a manifold $M$ and an integer $r>1$, the space of $r$-immersions of $M$ in $\R^n$ is defined to be the space of immersions of $M$ in $\R^n$ such that the preimage of every point in $\R^n$ contains fewer than $r$ points. We consider the space of $r$-immersions when $M$ is a disjoint union of $k$ $m$-dimensional discs, and prove that it is equivalent to the product of the $r$-configuration space of $k$ points in $\R^n$ and the $k^{\text{th}}$ power of the space of injective linear maps from $\R^m$ to $\R^n$. This result is needed in order to apply Michael Weiss's manifold calculus to the study of $r$-immersions. The analogous statement for spaces of embeddings is ``well-known'', but a detailed proof is hard to find in the literature, and the existing proofs seem to use the isotopy extension theorem, if only as a matter of convenience. Isotopy extension does not hold for $r$-immersions, so we spell out the details of a proof that avoids using it, and applies to spaces of $r$-immersions.
\end{abstract}

\maketitle




\parskip=5pt
\parindent=0cm




\begin{section}{Introduction}
Embedding calculus (also known as manifold calculus) is a method, invented by M. Weiss~\cite{W:EI1}, for analysing presheaves on manifolds. Suppose $F$ is a contravariant functor defined on a suitable category of $m$-dimensional manifolds. Let $D^m$ be the open unit disc in $\R^m$. One of the main ideas of embedding calculus is to first focus on the value of $F$ on manifolds of the form $\coprod_{i=1}^k D^m$, and then extrapolate from there to get approximations to the value of $F$ on general $m$-dimensional manifolds. For this approach to be useful, one generally needs a good understanding of the value of $F$ on disjoint unions of copies of $D^m$. 

The original motivating example for embedding calculus is, not coincidentally, the embedding functor $\Emb(-, \R^n)$, were $\R^n$ is a fixed vector space and the domain of the functor is considered to be the category of $m$-dimensional manifolds and codimension zero embeddings, for some fixed $m$. One can also replace $\R^n$ with a more general manifold $N$, but we will restrict ourselves to embeddings into a Euclidean space.

In order to apply Weiss's machinery to the embedding functor, one needs a good understanding of the homotopy type of spaces of the form $\Emb(\coprod_k D^m, \R^n)$. Fortunately, the homotopy type of these spaces is well-understood. To describe it, let $\Conf(k, \R^n) \subset (\R^n)^k$ be the configuration space of ordered $k$-tuples of pairwise distinct points in $\R^n$. That is, for $\underline{k}=\{1,\ldots,k\}$ $$\Conf(k, \R^n):=\Emb(\underline{k},\R^n)\cong \{(x_1,\ldots,x_k)\in (\R^n)^k: x_i \neq x_j \text{ for } i \neq j\}.$$
Also let $\inj(\R^m, \R^n)$ denote the space of injective linear maps from $\R^m$ to $\R^n$.

There is a natural map 
\begin{equation}\label{eq:EmbConf}
\Emb \left(\coprod_k D^m, \R^n\right) \xrightarrow{\simeq} \Conf(k, \R^n)\times \inj(\R^m, \R^n)^k.
\end{equation}
The map is defined by evaluating an embedding at the centers of the discs, and also differentiating  at the centers of the discs. It is well-known that the map~\eqref{eq:EmbConf} is an equivalence.

Recently there has been an emerging interest in applying embedding calculus to the study of {\it $r$-immersions}~\cite{DT:Homology, SSV:r-imm}. Given an integer $r>1$, an $r$-immersion is an immersion with the property that the preimage of each point consists of fewer than $r$ points. Let $\rImm(M, N)$ denote the space of $r$-immersions of $M$ into $N$. For $r=2$, a $2$-immersion is the same thing as an injective immersion. When $M$ is compact it is the same as an embedding. More generally, when $M$ is tame (i.e., is the interior of a compact manifold with boundary), the space of $2$-immersions is equivalent to the space of embeddings. Thus for practical purposes, we can identify the space of $2$-immersions with the space of embeddings. In general, we have inclusions 
\[\Emb(M, N) \subset 3\Imm(M, N) \subset \cdots \subset\rImm(M, N) \subset \cdots \subset \Imm(M, N).\]

In order to apply embedding calculus to the study of $r$-immersions, one would like to have an analogue of the equivalence~\eqref{eq:EmbConf} for $r$-immersions. Let $\rConf(k, \R^n)$, called the \textit{$r$-configuration space}, also known as  \textit{non $r$-equal configuration space}, of $k$ points in $\R^n$, be defined to be the space
$$\rConf(k,\R^n):=\rImm(\underline{k},\R^n)\cong \{(x_1,\ldots,x_k) \in (\R^n)^k : \nexists 1\le i_1<\cdots < i_r \le k \text{ s.t. } x_{i_1}=\ldots=x_{i_r} \}.$$ 

There is a natural map
\begin{equation}\label{rImmrConf}
\rImm\left(\coprod_k D^m, \R^n\right) \xrightarrow{\simeq} \rConf(k, \R^n)\times \inj(\R^m, \R^n)^k,
\end{equation}
defined by evaluation at the centers of the discs and differentiation at the centers of the discs. It is a generalization of~\eqref{eq:EmbConf}. Our main result (Theorem~\ref{theorem: rimm to config}) says that this map is an equivalence. This fact is implicitly assumed in~\cite{SSV:r-imm}, and to some extent also in~\cite{DT:Homology}.

As we mentioned above, the case $r=2$ of our result is well-known. However, we had trouble finding a detailed proof of it in the literature. Furthermore, proofs that we did find tend to use at some point the fact that when $M_0$ is a closed submanifold of $M$, the restriction maps $\Emb(M, \R^n) \to \Emb(M_0, \R^n)$ and $\Imm(M, \R^n) \to \Imm(M_0, \R^n)$ are fibrations. This property definitely fails for $r$-immersions when $2<r<\infty$. Even when $M=\underline{k}$ is a finite set (a zero-dimensional manifold), and $\underline{k_0}\subset \underline{k}$, the restriction map $\rConf(k, \R^n)\to \rConf(k_0, \R^n)$ is not a fibration.

Our companion paper \cite{AS:homological} relies heavily on the map~\eqref{rImmrConf} being an equivalence. It thus seems prudent to write out a full proof of this assertion. We hope that the result is interesting enough to stand on its own.




\subsubsection*{Notation for the derivative of a function} Since the letter $D$ denotes a disc in $\R^m$, we avoid using it to denote the differential. Instead we use the same notation for derivatives as in~\cite{C:calculus}. Let $U, V$ be open subsets of $\R^m$ and $\R^n$ respectively, and let $f\colon U \to V$ be a smooth function. We define $f'$ to be the function $f'\colon U \to \linear(\R^m, \R^n)$ which associates to a point $x\in U$ the Fr\'echet derivative of $f$ at $x$. Thus for each $x\in U, f'(x)$ is a linear homomorphism from $\R^m$ to $\R^n$. The linear approximation of $f$ at $0$ can be written as $Lf(x)= f(0)+f'(0)(x)$. 

Similarly $f''\colon U \to \linear\left((\R^m\otimes \R^m)_{\Sigma_2}, \R^n\right)$ denotes the second derivative of $f$, and $f^{(i)}\colon U \to \linear\left((\R^{m})^{\otimes i}_{\Sigma_i}, \R^n\right)$ denotes the $i$-th derivative of $f$. Thus for each $x\in U$, $f^{(i)}(x)$ is a symmetric multilinear map from $\R^m\times \cdots \times \R^m$ to $\R^n$. 

We will equip the space of multilinear maps with the norm metric. Thus $$||f^{(i)}(x)||_{\mathrm{norm}}=\sup\{||f^{(i)}(x)(u_1, \ldots, u_i)||_{\R^n} : ||u_1||_{\R^m}= \cdots =||u_i||_{\R^m}=1\}.$$
Subsequently, we will just write $||-||$ to denote the norm of a vector or an operator, trusting that it is clear from the context which norm is meant.

This notion of derivative can be extended to smooth maps $f\colon M\to N$ where $M, N$ are smooth manifolds with boundary. We will only apply it to the case when $N=\R^n$ and $M$ is a finite disjoint union of either open or closed discs in $\R^m$.

\subsubsection*{Topology on spaces of smooth maps} We endow the space $C^\infty(M, \R^n)$ of smooth maps from $M$ to $\R^n$ with the compact-open $C^\infty$-topology, a.k.a the weak Whitney $C^\infty$-topology. Thus a sequence of smooth functions $f_j\colon M\to \R^n$ converges to $f$ if for every $k\ge 0$ the sequence $f_j^{(k)}$ converges to $f^{(k)}$ uniformly on every compact subspace of $M$. When $M$ is a subset of $\R^m$, this is equivalent to saying that for every multi-index $\alpha$, the sequence $\partial^\alpha f_j$ converges to $\partial^\alpha f$ uniformly on every compact subspace of $M$. All spaces of embeddings, immersions, $r$-immersions, etc. are endowed with the subspace topology from the space of smooth maps.
\end{section}

\begin{section}{$r$-immersions of a union of discs}\label{S:rIC}

Let $D^m$ be the open unit disc in $\R^m$ and $k$ a natural number. Let $\inj(\R^m, \R^n)$ denote the space of injective linear maps from $\R^m$ to $\R^n$. There is a canonical map
\[
\ev\colon \rImm\left(\coprod_k D^m, \R^n\right) \to \rConf(k, \R^n)\times \inj(\R^m, \R^n)^k.
\]
The map $\ev$ is defined by evaluating an $r$-immersion $f\colon \coprod_k D^m \to \R^n$ at the center of each disc $D^m$ and also differentiating $f$ at the center of each disc. The main result of this paper is the following theorem
\begin{thm}\label{theorem: rimm to config}
The map $\ev$ is a weak homotopy equivalence.
\end{thm}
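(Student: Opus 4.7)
The plan is to exhibit an explicit homotopy inverse $\sigma\colon Y \to X$, where $X := \rImm(\coprod_k D^m, \R^n)$ and $Y := \rConf(k,\R^n) \times \inj(\R^m,\R^n)^k$, and verify both composites are homotopic to the identity. Given $(x,A) = (x_1,\ldots,x_k; A_1,\ldots,A_k) \in Y$, let $\sigma(x,A)$ be the map that is affine on the $i$-th disc, $y \mapsto x_i + \epsilon(x,A) A_i y$, where $\epsilon > 0$ is a continuous scale chosen small enough to guarantee the $r$-immersion property. A concrete choice is
\[
\epsilon(x,A) := \frac{d(x)}{3 \max_i \|A_i\|}, \qquad d(x) := \min_{\substack{S \subset \underline{k}\\|S|=r}} \max_{i,j \in S} |x_i - x_j|,
\]
which is positive and continuous on $\rConf(k,\R^n)$ because at most $r-1$ of the $x_i$ can coincide. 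With this $\epsilon$, the image of the $i$-th disc lies in the open ball of radius $d(x)/3$ about $x_i$; any point lying in $r$ such balls would force $r$ of the centers to be pairwise within $2d(x)/3$, contradicting the definition of $d$. Hence $\sigma(x,A) \in X$, and $\ev \circ \sigma(x,A) = (x, \epsilon(x,A) A)$ is homotopic to $\operatorname{id}_Y$ via the straight-line rescaling $(x,A) \mapsto (x, ((1-t)\epsilon(x,A)+t) A)$, which lies in $Y$ since positive rescaling preserves injectivity.

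The main effort, and the main obstacle, is constructing a homotopy $\sigma \circ \ev \simeq \operatorname{id}_X$ through $r$-immersions. I would concatenate three stages. \emph{Stage one} shrinks the source: replace $f$ by $f^{(s)}(y) = f(\lambda(s) y)$, with $\lambda(s)$ continuously decreasing from $1$ to $\delta(f) > 0$; this stays an $r$-immersion because the preimage of any point under $f^{(s)}$ is in bijection with its preimage under $f|_{\lambda(s) D^m}$, which has fewer than $r$ elements. \emph{Stage two}, on the $i$-th disc, linearly interpolates from $f(\delta y)$ to the affine approximation $x_i + \delta A_i y$:
\[
g_t(y) := (1-t) f(\delta y) + t (x_i + \delta A_i y), \quad t \in [0, 1].
\]
\emph{Stage three} linearly rescales the resulting affine map from scale $\delta$ to scale $\epsilon(\ev f)$.

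The delicate point is that stage two must remain in $r$-immersions. Taylor's theorem applied on the compact ball $\overline{B(0,\delta)} \subset D^m$ yields $|f(\delta y) - x_i - \delta A_i y| \le C_f \delta^2$ uniformly for $y \in D^m$, where $C_f$ depends continuously on $f$ in the weak Whitney topology. Hence $g_t$ sends the $i$-th disc into the open ball of radius $\delta \|A_i\| + C_f \delta^2$ about $x_i$. By choosing $\delta(f) > 0$ continuously and small enough that this bound is less than $d(\ev f)/3$ for every $i$ and that $\delta(f) \le \epsilon(\ev f)$, the disjointness argument used for $\sigma$ shows each $g_t$ is an $r$-immersion; moreover $Dg_t(y) = \delta\bigl[(1-t) Df(\delta y) + t A_i\bigr]$ is close to the injective $\delta A_i$ for small $\delta$, so $g_t$ is an immersion on each disc. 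Stage three is then automatic, since any scale in $(0, \epsilon(\ev f)]$ satisfies the disjointness bound. A continuous selection of $\delta(f)$ meeting all these constraints exists because every relevant quantity varies continuously in $f$, yielding a concatenated homotopy continuous in $(f,t)$.
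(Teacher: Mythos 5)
Your overall architecture (shrink to a small scale, interpolate to the linear model, rescale) mirrors the paper's chain of inclusions $\linrImm\hookrightarrow\smallrImm\hookrightarrow\rImmce\hookrightarrow\rImm$ step by step, and your $d(x)/3$-ball disjointness argument plays the role of the paper's $\SafeDistance$. However, Stage two has a genuine logical gap. You deduce that each $g_t$ is an $r$-immersion from two facts: the images of the $k$ discs lie in the $d(x)/3$-balls around the $x_i$ (so no $r$ of them share a point), and $Dg_t$ is close to $\delta A_i$ (so $g_t$ is an immersion on each disc). These two facts together do \emph{not} give the $r$-immersion condition. The ball argument only rules out $r$ preimages distributed over $r$ \emph{distinct} discs; if $g_t$ had a self-intersection of multiplicity $\ge 2$ inside a single disc, a point could accumulate $r$ preimages spread over fewer than $r$ discs. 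What you actually need --- and what the paper isolates as the intermediate space $\rImmce$ of $r$-immersions that are componentwise \emph{embeddings} --- is injectivity of $g_t$ on each disc, not merely immersivity. That injectivity does hold here, but it requires a further quantitative argument: $g_t$ is a $C^1$-small perturbation of the affine embedding $y\mapsto x_i+\delta A_i y$, and if one imposes an additional constraint of the form $\delta\cdot\sup_{\overline{B(0,1/2)}}\|D^2 f\|<\sigma_{\min}(A_i)$, the mean-value estimate $\|g_t(y)-g_t(y')\|\ge\bigl(\delta\sigma_{\min}(A_i)-O(\delta^2)\bigr)\|y-y'\|$ forces injectivity. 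As written, your proof omits this step.

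Two smaller points. First, the assertion that ``a continuous selection of $\delta(f)$ exists because every relevant quantity varies continuously'' glosses over a circularity: $C_f$ is a bound on second derivatives over $\overline{B(0,\delta)}$, which itself depends on $\delta$. You should first cap $\delta\le 1/2$ and set $C_f:=\sup_{\overline{B(0,1/2)}}\|D^2f\|$, which is finite and continuous in $f$ in the weak $C^\infty$-topology, and then solve the resulting inequalities explicitly for $\delta$; otherwise the constraint set is ill-posed. Second, it is worth noting what the paper's route buys: it proves the first inclusion $\rImmce\hookrightarrow\rImm$ is only a \emph{weak} equivalence, using compactness at the level of $\pi_d$ (Corollary 2.5) to avoid producing any global continuous shrinking parameter at all; and for the linearization step it uses the reparametrized homotopy $H(f,t)(x)=f(0_i)+\frac{f(tx)-f(0_i)}{t}$, which manifestly preserves componentwise injectivity and the safe-distance bound $\lr<\SafeDistance$ at every $t$, with no Taylor estimate needed. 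Your direct homotopy-inverse package is viable, but the linear interpolation trades that automatic preservation for an error analysis, which is exactly where the missing injectivity lemma lives.
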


The proof of Theorem~\ref{theorem: rimm to config} goes through a construction of several intermediate spaces. To begin with, let us introduce notation for some auxiliary spaces of componentwise embeddings.
\begin{defin}
Suppose $\coprod_{i=1}^k M_i$ is a disjoint union of $k$ manifolds, possibly with boundary. Let $$\CEmb\left(\coprod_{i=1}^k M_i, \R^n\right)\subset C^\infty\left(\coprod_{i=1}^k M_i, \R^n\right)$$ be the space of smooth maps that restrict to an embedding on each component. Similarly, let $\rImmce\left(\coprod_{i=1}^k M_i, \R^n\right)$ be the subspace of $\rImm\left(\coprod_{i=1}^k M_i, \R^n\right)$ consisting of those $r$-immersions that restrict to an embedding on each $M_i$. \end{defin}
In practice we will use this notation only when each $M_i$ is an open or closed disc in $\R^m$.
\begin{rem}\label{remark:open}
It is well-known that for a compact $M$, possibly with boundary,  $\Emb(M, N)$ is open in $C^\infty(M, N)$ (since $M$ is compact, the strong and weak Whitney topologies coincide). See~\cite[Theorem 1.4]{Hirsch:Differential} or~\cite[Proposition 9.5.9]{MR-OD:DiffTop}, where the case with boundary is treated more explicitly. Since there are homeomorphisms $\CEmb\left(\coprod_{i=1}^k M_i, \R^n\right)\xrightarrow{\cong} \prod_i \Emb\left(M_i, \R^n\right)$ and $C^\infty\left(\coprod_{i=1}^k M_i, \R^n\right)\xrightarrow{\cong} \prod_i C^\infty\left(M_i, \R^n\right)$, it follows that when the manifolds $M_i$ are compact, $\CEmb\left(\coprod_{i=1}^k M_i, \R^n\right)$ is an open subset of $C^\infty\left(\coprod_{i=1}^k M_i, \R^n\right)$. Since $$\rImmce\left(\coprod_{i=1}^k M_i, \R^n\right)= \rImm\left(\coprod_{i=1}^k M_i, \R^n\right) \cap \CEmb\left(\coprod_{i=1}^k M_i, \R^n\right),$$ it follows that, again assuming $M_i$ are compact, $\rImmce\left(\coprod_{i=1}^k M_i, \R^n\right)$ is an open subset of $\rImm\left(\coprod_{i=1}^k M_i, \R^n\right)$. 
\end{rem}
We will now define a map that ``almost'' exhibits $\rImmce\left(\coprod_k D^m, \R^n\right)$ as a deformation retract of 
$\rImm\left(\coprod_k D^m, \R^n\right)$. 
\begin{defin} \label{def:almost retraction}
Let $f \in C^\infty\left(\coprod_k D^m, \R^n\right)$ and $0\le t \le 1$. Define the map $f_t\in C^\infty\left(\coprod_k D^m, \R^n\right)$ by the formula $f_t(x)=f(tx)$. It is clear that if $f$ is an $r$-immersion then so is $f_t$ for all $0<t\le 1$, but not for $t=0$. Likewise, if $f$ is a componentwise embedding, then so is $f_t$ for all $0<t\le 1$. Define the map 
\[
H\colon \rImm\left(\coprod_k D^m, \R^n\right) \times (0, 1] \to \rImm\left(\coprod_k D^m, \R^n\right)
\]
by the formula
\[
H(f, t)=f_t.
\]
\end{defin}
For all $f\in \rImm\left(\coprod_k D^m, \R^n\right)$, since $f$ is a local embedding, there exists an $\epsilon >0$ such that for all $0 <t\le \epsilon$, $H(f, t)\in \rImmce\left(\coprod_k D^m, \R^n\right)$. The following easy lemma is a strengthening of this obsevation.
\begin{lemma}\label{lemma: local retract}
For every $f\in \rImm\left(\coprod_k D^m, \R^n\right)$ there exists an open neighbourhood $U$ of $f$ and an $\epsilon>0$ such that $H(U\times (0, \epsilon])\subset \rImmce\left(\coprod_k D^m, \R^n\right)$.
\end{lemma}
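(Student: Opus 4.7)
The plan is to first find a closed subdisc around each centre on which $f$ itself is an embedding, then to use the openness of the ``embedding on a fixed compact subset'' condition to enlarge this to a neighbourhood $U$ of $f$ in $\rImm$, and finally to choose $\epsilon$ so that the rescaled discs fit inside that subset. Since $f$ is an immersion, the differential $df_{0_i}$ at the centre $0_i$ of the $i$-th disc is injective, so the local immersion theorem produces a radius $r_i>0$ such that $f|_{\overline{r_i D^m}}$ is an embedding. Setting $\epsilon:=\min_i r_i$ and $K:=\coprod_{i=1}^k \overline{\epsilon D^m}$, the restriction of $f$ to $K$ is a componentwise embedding.

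For the second step, the claim is that the set
\[
V := \left\{g\in C^\infty\left(\coprod_k D^m, \R^n\right) : g|_K \text{ is a componentwise embedding}\right\}
\]
is open in the weak Whitney topology, so that $U := V\cap \rImm\left(\coprod_k D^m, \R^n\right)$ is an open neighbourhood of $f$ in $\rImm$. The componentwise immersion condition on the compact set $K$ is $C^1$-open by continuity of $(x,g)\mapsto dg_x$. For componentwise injectivity, I would cover each compact component $K_i$ by finitely many small open sets on each of which $f$ is quantitatively injective, in the sense $\|f(x)-f(y)\|\ge c\|x-y\|$ for some $c>0$ (obtained by applying the mean value inequality to local left inverses of $f$); a Lebesgue-number argument then ensures that pairs $(x,y)\in K_i\times K_i$ with $\|x-y\|$ small enough lie in a common such set, while for pairs with $\|x-y\|$ bounded below, compactness gives a uniform lower bound $\|f(x)-f(y)\|\ge \delta>0$. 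Both properties are stable under sufficiently small $C^1$-perturbations on $K$, which the weak Whitney topology provides.

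Granting this, for $g\in U$ and $t\in (0, \epsilon]$ the restriction of $g_t$ to the $i$-th component factors as $D^m\xrightarrow{\sigma_t} tD^m\subset \overline{\epsilon D^m}\xrightarrow{g} \R^n$, where $\sigma_t(x)=tx$ is a diffeomorphism onto its open image and the second map is an embedding by the defining property of $U$; hence $g_t$ is a componentwise embedding, and it remains an $r$-immersion because $\sigma_t$ is globally injective on $\coprod_k D^m$, so preimages under $g_t$ inject into preimages under $g$ and have cardinality strictly less than $r$. The substantive difficulty is the openness assertion in the second step; it must be argued without invoking the local fibration property of restriction maps on spaces of embeddings, since the analogous property fails for $r$-immersions, and the Lebesgue-number-plus-compactness argument sketched here achieves this using only $C^1$-closeness on the compact set $K$.
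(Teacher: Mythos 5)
Your proof is correct and takes essentially the same route as the paper: both fix a compact ball $B^m_\epsilon = \overline{\epsilon D^m}$ around each centre on which $f$ embeds, use openness of the condition ``restricts to an embedding on this compact set'' to produce $U$, and observe that $g_t$ for $t\le\epsilon$ factors through that ball. The only difference is that where the paper simply invokes the well-known openness of $\Emb\left(\coprod_k B^m_\epsilon,\R^n\right)$ in $C^\infty\left(\coprod_k B^m_\epsilon,\R^n\right)$, you sketch a direct proof of it via quantitative injectivity and a Lebesgue-number argument, and you also spell out explicitly (what the paper leaves implicit in the definition of $H$) that $g_t$ stays an $r$-immersion.
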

\begin{proof}
Let $f\colon \coprod_k D^m \to \R^n$ be an $r$-immersion. For $0<\epsilon <1$, let $B^m_\epsilon\subset D^m$ denote the closed disc of radius $\epsilon$. Since $f$ is a local embedding one can find an $\epsilon>0$ so that $f$ restricts to an embedding of $B^m_\epsilon$ for each copy of $D^m$. This means that the restriction map
\[
\rho\colon \rImm\left(\coprod_k D^m, \R^n\right) \rightarrow \rImm\left(\coprod_k B^m_\epsilon, \R^n\right)
\]
takes $f$ into the subspace $\rImmce\left(\coprod_k B^m_\epsilon, \R^n\right)\subset  \rImm\left(\coprod_k B^m_\epsilon, \R^n\right)$. By Remark~\ref{remark:open} it is an open subset. Thus $\rho^{-1}\left(\rImmce\left(\coprod_k B^m_\epsilon, \R^n\right)\right)$ is the required open neighborhood of $f$ in $\rImm\left(\coprod_k D^m, \R^n\right)$.
\end{proof}
\begin{cor}\label{cor: retract for compact}
For any compact space $K$ and map $K\to \rImm\left(\coprod_k D^m, \R^n\right)$ there exists an $\epsilon>0$ such that the composition
\[
K\times (0, 1] \to \rImm\left(\coprod_k D^m, \R^n\right)\times (0, 1] \xrightarrow{H} \rImm\left(\coprod_k D^m, \R^n\right)
\]
takes $K\times (0, \epsilon]$ into $ \rImmce\left(\coprod_k D^m, \R^n\right)$.
\end{cor}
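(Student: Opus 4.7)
The plan is to deduce this directly from Lemma~\ref{lemma: local retract} by a standard compactness argument. Denote the given map by $\varphi\colon K\to \rImm\left(\coprod_k D^m, \R^n\right)$.

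First, for each $f$ in the image $\varphi(K)$, Lemma~\ref{lemma: local retract} supplies an open neighbourhood $U_f\subset \rImm\left(\coprod_k D^m, \R^n\right)$ of $f$ and a positive real $\epsilon_f>0$ such that $H(U_f\times(0,\epsilon_f])\subset \rImmce\left(\coprod_k D^m, \R^n\right)$. Pulling these back along $\varphi$ produces an open cover $\{\varphi^{-1}(U_f)\}_{f\in\varphi(K)}$ of $K$.

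Next, since $K$ is compact, I would extract a finite subcover $\varphi^{-1}(U_{f_1}),\ldots,\varphi^{-1}(U_{f_N})$ and set $\epsilon=\min\{\epsilon_{f_1},\ldots,\epsilon_{f_N}\}>0$. For any $x\in K$, there is some $i$ with $\varphi(x)\in U_{f_i}$, and hence for every $t\in(0,\epsilon]\subset(0,\epsilon_{f_i}]$ we have $H(\varphi(x),t)\in \rImmce\left(\coprod_k D^m, \R^n\right)$, which is exactly the claim.

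There is no genuine obstacle here: the real work was done in Lemma~\ref{lemma: local retract} (using openness of the embedding space in the Whitney $C^\infty$-topology), and the present statement is just the standard upgrade from pointwise to uniform behaviour via compactness. The only thing to verify is that $\varphi^{-1}(U_f)$ is open in $K$, which follows from continuity of $\varphi$ and the fact that the topology on $\rImm$ is the subspace topology from $C^\infty$, so no subtlety arises.
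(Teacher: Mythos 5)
Your proposal is correct and follows essentially the same route as the paper's proof: apply Lemma~\ref{lemma: local retract} pointwise, pull the resulting neighbourhoods back to $K$, extract a finite subcover by compactness, and take the minimum of the corresponding $\epsilon$'s. The only cosmetic difference is that you index the cover by points of the image rather than by points of $K$, which changes nothing.
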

\begin{proof}
Suppose we have a map $h: K \to \rImm(\coprod_k D^m, \R^n)$. Let $x \in K$. Then $h(x)$ is an $r$-immersion. By Lemma \ref{lemma: local retract}, there is an open neighborhood $V$ of $h(x)$ in $\rImm(\coprod_k D^m, \R^n)$, and a positive number $\epsilon_x$ such that all the elements of $H(V \times (0,\epsilon_{x}])$ are componentwise embeddings. Let $U_x=h^{-1}(V)$. Thus for every $x \in K$ we found an open neighborhood $U_x$ and a positive number $\epsilon_x$ such that every $r$-immersion in $h(U_x)$ restricts to an embedding of discs of radius $\epsilon_x$. By compactness of $K$, there is a finite collection of points, say $x_1, …, x_l$ such that $U_{x_1}, …, U_{x_l}$ cover $K$. Let $\epsilon=\min\{\epsilon_{x_1}, …, \epsilon_{x_l}\}$. Then every $r$-immersion in  $h(K)$ restricts to an embedding of discs of radius $\epsilon$. But this means that every element of $H( h(K) \times (0, \epsilon])$ is a componentwise embedding.
\end{proof}
Now we can complete the first important step toward proving Theorem~\ref{theorem: rimm to config}.
\begin{prop}
The inclusion 
\begin{equation}\label{eq:inclusion} 
\rImmce\left(\coprod_k D^m, \R^n\right) \hookrightarrow \rImm\left(\coprod_k D^m, \R^n\right)
\end{equation}
is a weak homotopy equivalence.
\end{prop}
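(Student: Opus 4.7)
The plan is to show that the relative homotopy groups $\pi_{k+1}\!\left(\rImm\!\left(\coprod_k D^m, \R^n\right),\, \rImmce\!\left(\coprod_k D^m, \R^n\right)\right)$ vanish for every $k \geq -1$. Equivalently, given any map of pairs $\phi\colon (D^{k+1}, S^k) \to (\rImm, \rImmce)$, I would construct a homotopy of pairs deforming $\phi$ into a map landing entirely in $\rImmce$. Such vanishing of all relative homotopy groups, together with $\rImmce$ being a subspace of $\rImm$, implies the inclusion is a weak homotopy equivalence.

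The construction uses the homotopy $H$ already introduced. Since $D^{k+1}$ is compact, Corollary~\ref{cor: retract for compact} applied to $\phi$ produces an $\epsilon \in (0,1)$ such that $H(\phi(x), t) \in \rImmce$ for every $x \in D^{k+1}$ and every $t \in (0, \epsilon]$. Define
\[
\Phi\colon D^{k+1} \times [0,1] \to \rImm\!\left(\coprod_k D^m, \R^n\right), \qquad \Phi(x, s) = H\bigl(\phi(x),\, 1 - s(1-\epsilon)\bigr).
\]
Then $\Phi(-, 0) = \phi$, while $\Phi(-, 1) = H(\phi(-), \epsilon)$ lands in $\rImmce$ by the choice of $\epsilon$. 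The key observation needed for $\Phi$ to be a homotopy of \emph{pairs} is that the operation $f \mapsto f_t$ preserves $\rImmce$ for every $t \in (0,1]$: if $f$ restricts to an embedding on each copy of $D^m$, then so does $x \mapsto f(tx)$, being the composition of the scaling embedding with the original embedding. Therefore for $x \in S^k$, where $\phi(x)$ is already componentwise an embedding, the entire path $s \mapsto \Phi(x,s)$ stays in $\rImmce$.

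The case $k = -1$ (i.e.\ surjectivity on $\pi_0$) specializes to $D^0$ being a point: the above construction reduces to the observation that any $f \in \rImm$ is connected via the path $t \mapsto f_t$, which lies entirely in $\rImm$, to a point $f_\epsilon \in \rImmce$. I do not expect any serious obstacle in carrying out this argument; the only subtle point, which the preceding lemma and corollary have already absorbed, is that the threshold $\epsilon$ depends on $f$ and cannot be taken to be globally constant, so there is no straight deformation retraction in the strong sense, only a family of deformations adequate on each compact piece. The compactness of $D^{k+1}$, exploited through Corollary~\ref{cor: retract for compact}, is precisely what promotes the pointwise retraction to a homotopy defined uniformly on any compact family of test maps.
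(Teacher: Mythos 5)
Your proof is correct and uses the same essential ingredients as the paper's: the scaling homotopy $H(f,t)=f_t$, the uniformity statement for compact families (Corollary~\ref{cor: retract for compact}), and the observation that $f\mapsto f_t$ preserves the componentwise-embedding condition. The packaging differs: the paper runs separate surjectivity and injectivity arguments on $\pi_d$, whereas you collapse both into the vanishing of relative homotopy groups via the compression criterion, which is a tidy reformulation of the same idea. One small thing you elide that the paper handles explicitly: your homotopy $\Phi$ is a \emph{free} homotopy of pairs and does not fix the basepoint $s_0\in S^k$ in $\rImm$; it only keeps $\Phi(s_0,-)$ inside $\rImmce$. To conclude $[\phi]=0$ in $\pi_{k+1}(\rImm,\rImmce,x_0)$ one should note that the moving basepoint traces a path $\gamma$ in $\rImmce$, and that the corresponding change-of-basepoint isomorphism carries $[\Phi_1]=0$ to $[\phi]$ --- this is exactly the ``conjugation by the path $\Psi|_{*\times[\epsilon,1]}$'' step in the paper. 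Worth spelling out, but not a gap: it is a standard lemma about free homotopies of pairs.
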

\begin{proof}
First of all, the inclusion is surjective on $\pi_0$. Indeed, suppose $f\in \rImm\left(\coprod_k D^m, \R^n\right)$. Then $H(\{f\}\times (0, 1])$ defines a path from $f$ to a point in $\rImmce\left(\coprod_k D^m, \R^n\right)$.

Second, let us show that the inclusion is injective on $\pi_0$. Suppose $f, g\in \rImmce\left(\coprod_k D^m, \R^n\right)$ and there is a path $\alpha\colon [0, 1]\to \rImm\left(\coprod_k D^m, \R^n\right)$ from $f$ to $g$. Consider the composition 
\[
[0, 1]\times (0, 1] \xrightarrow{\alpha\times 1_{(0, 1]}} \rImm\left(\coprod_k D^m, \R^n\right)\times (0, 1]\xrightarrow{H} \rImm\left(\coprod_k D^m, \R^n\right).
\]
It follows from Corollary~\ref{cor: retract for compact} that for some $0<\epsilon < 1$ this map restricts to a map
\[
[0, 1]\times [\epsilon, 1] \xrightarrow{H\circ(\alpha\times 1_{[\epsilon, 1]})} \rImm\left(\coprod_k D^m, \R^n\right)
\]
that sends $[0, 1]\times \{\epsilon\}$ into $\rImmce\left(\coprod_k D^m, \R^n\right)$. Moreover, since $\alpha$ maps $\partial([0, 1])$ into the subspace $\rImmce\left(\coprod_k D^m, \R^n\right)$, and $H$ preserves this subspace, it follows that $H\circ(\alpha\times 1_{[\epsilon, 1]})$ maps $\partial([0, 1])\times [\epsilon, 1]$ into $\rImmce\left(\coprod_k D^m, \R^n\right)$. Altogether it follows that  $H\circ(\alpha\times 1_{[\epsilon, 1]})$ takes $[0, 1]\times \{\epsilon\}\cup \partial([0, 1])\times [\epsilon, 1]$ into $\rImmce\left(\coprod_k D^m, \R^n\right)$. Therefore $H\circ(\alpha\times 1_{[\epsilon, 1]})$ defines a path homotopy between $\alpha$, and a path from $f$ to $g$ that lies entirely in $\rImmce\left(\coprod_k D^m, \R^n\right)$. We have proved that the inclusion is injective on $\pi_0$.

Now let us choose a basepoint in $\rImmce\left(\coprod_k D^m, \R^n\right)$ and let it also serve as the basepoint of $\rImm\left(\coprod_k D^m, \R^n\right)$. We want to show that for all $d\ge 1$ the induced homomorphism
\[
\pi_d\left(\rImmce\left(\coprod_k D^m, \R^n\right)\right)\to \pi_d\left(\rImm\left(\coprod_k D^m, \R^n\right)\right)
\]
is an isomorphism.
For surjectivity, let $h\colon S^d\to \rImm\left(\coprod_k D^m, \R^n\right)$ be a pointed map. We need to show that $h$ is pointed homotopic to a map $S^d \to \rImmce\left(\coprod_k D^m, \R^n\right)$. Using Corollary~\ref{cor: retract for compact} once more we conclude that there exists an $0< \epsilon <1$ such that the composition
\[
S^d\times [\epsilon, 1] \xrightarrow{h\times 1_{[\epsilon, 1]}} \rImm\left(\coprod_k D^m, \R^n\right) \times [\epsilon, 1] \xrightarrow{H} \rImm\left(\coprod_k D^m, \R^n\right)
\]
takes $S^d\times\{\epsilon\}$
into $\rImmce\left(\coprod_k D^m, \R^n\right)$. 
We obtained an {\it unpointed} homotopy of $h$ to a map $h_\epsilon\colon S^d\to \rImmce\left(\coprod_k D^m, \R^n\right)$. Furthermore, $h$ takes the basepoint of $S^d$ into $\rImmce\left(\coprod_k D^m, \R^n\right)$, so it follows that the homotopy $H\circ (h\times 1_{[\epsilon, 1]})$, while not constant on the basepoint, keeps the basepoint inside $\rImmce\left(\coprod_k D^m, \R^n\right)$. It follows that $h$ is pointed homotopic to a conjugation of $h_\epsilon$ by a path in $\rImmce\left(\coprod_k D^m, \R^n\right)$, which completes the proof of surjectivity on $\pi_d$.

Finally, we need to show that the inclusion is injective on $\pi_d$. Suppose $h\colon S^d\to \rImmce\left(\coprod_k D^m, \R^n\right)$ represents an element of the kernel. It means that $h$ extends to a map $\tilde h\colon D^{d+1}\to \rImm\left(\coprod_k D^m, \R^n\right)$. Using $H$ and Corollary~\ref{cor: retract for compact} once again, one can show that $\tilde h$ can be deformed into a map $D^{d+1} \to \rImmce\left(\coprod_k D^m, \R^n\right)$ that defines a null homotopy of $h$, thus proving that $h$ represents zero in $\pi_d\left(\rImmce\left(\coprod_k D^m, \R^n\right)\right)$. The details of this last step are left to the reader. 
\end{proof}
Let us say that an immersion of $\coprod_k D^m$ is non-$r$-overlapping if the intersection of images of every $r$ components is empty. Note that a componentwise embedding is an $r$-immersion if and only if it is non-$r$-overlapping.

The following definition is taken from \cite{G:Thesis}. 
\begin{defin}
Let $f\colon \coprod_k D^m \to \R^n$ be a map. For each $1\le i \le k$, let $0_i$ be the center of the $i$-th copy of $D^m$ in the coproduct $\coprod_k D^m$. For each $r$-tuple of integers $\vec{i}=(i_1, \ldots, i_r)$, where $1\le i_1<i_2<\cdots <i_r\le k$, let 
$$
f_{\vec{i}}=\frac{f(0_{i_1}) +\cdots + f(0_{i_r})}{r}.
$$
Finally define $\SafeDistance(f)$ by the following formula
\[
\SafeDistance(f)=\frac{1}{\sqrt{r}} \ \underset{1\le i_1<\cdots <i_r\le k}{\min}\sqrt{\sum_{j=1}^r || f(0_{i_j}) - f_{\vec{i}}||^2}.
\]
\end{defin}
The notation $\SafeDistance$ stands for ``safe distance''. It is a distance for which it is guaranteed that if the radius of each disc is less than the safe distance, then the immersion is non-$r$-overlapping (as we will prove shortly). The formula for $\SafeDistance(f)$ does not give the largest possible safe distance, but what matters is that $\SafeDistance(f)$ depends continuously on $f$.

Next, let us make precise the notion of a radius of an immersion of a union of discs.
\begin{defin}
Let $f\colon \coprod_k D^m \to \R^n$ be a map. Let $D^m_i$ denote the $i$-th copy of $D^m$ in the coproduct. Define the radius of $f$ to be the following
\[
\radius(f)=\sup_{1\le i \le k, \, x\in D^m_i} ||f(x)-f(0_i)||.
\]
\end{defin}
Note that $\radius(f)$ can be $\infty$. But if, for example, $f$ is the restriction of a map defined on a union of closed unit discs then $\radius(f)$ is finite.

The point of the last two definitions is that they give a condition for an immersion to be non-$r$-overlapping. The following lemma is present implicitly in~\cite{G:Thesis}.
\begin{lemma}\label{lemma:nonoverlapping}
Let $f\colon \coprod_k D^m \to \R^n$ be a map. If $\radius(f)<\SafeDistance(f)$ then $f$ is non-$r$-overlapping.
\end{lemma}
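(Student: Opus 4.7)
The plan is to argue by contradiction: suppose $f$ is $r$-overlapping, so there exist indices $1 \le i_1 < \cdots < i_r \le k$ and points $x_j \in D^m_{i_j}$ with $f(x_1) = \cdots = f(x_r) =: y$. I will show that this forces $\SafeDistance(f) \le \radius(f)$, contradicting the hypothesis.

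The main tool is the standard ``variance'' (parallel axis) identity: for any $r$ points $p_1, \ldots, p_r \in \R^n$ with centroid $\bar p = \tfrac{1}{r}\sum_j p_j$ and any auxiliary point $q \in \R^n$,
\[
\sum_{j=1}^r \|p_j - q\|^2 \;=\; \sum_{j=1}^r \|p_j - \bar p\|^2 \;+\; r\,\|\bar p - q\|^2.
\]
Applied to $p_j = f(0_{i_j})$, $\bar p = f_{\vec i}$, and $q = y$, this gives
\[
\sum_{j=1}^r \|f(0_{i_j}) - f_{\vec i}\|^2 \;\le\; \sum_{j=1}^r \|f(0_{i_j}) - y\|^2.
\]

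Now I use the definition of $\radius(f)$: since $x_j \in D^m_{i_j}$ and $f(x_j) = y$, each term on the right satisfies $\|f(0_{i_j}) - y\| = \|f(0_{i_j}) - f(x_j)\| \le \radius(f)$, so the right side is at most $r\,\radius(f)^2$. Taking square roots and dividing by $\sqrt r$,
\[
\frac{1}{\sqrt r}\sqrt{\sum_{j=1}^r \|f(0_{i_j}) - f_{\vec i}\|^2} \;\le\; \radius(f).
\]
Since $\SafeDistance(f)$ is the minimum of the left-hand side over all $r$-tuples $\vec i$, in particular $\SafeDistance(f) \le \radius(f)$, contradicting $\radius(f) < \SafeDistance(f)$.

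The only substantive step is the variance identity; everything else is unpacking definitions. I do not anticipate any obstacle — this is essentially a one-line application of the identity that the sum of squared distances from a set of points to a fixed point is minimized at the centroid.
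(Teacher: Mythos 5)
Your proof is correct and follows essentially the same route as the paper: argue by contradiction, use the fact that the centroid minimizes the sum of squared distances (the paper states the inequality directly, you justify it via the parallel-axis identity), and bound the right-hand side by $r\cdot\radius(f)^2$.
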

\begin{proof}
Suppose by contradiction that $f$ is $r$-overlapping. Then there exists an $r$-tuple $\vec{i}=(i_1,\ldots, i_r)$ and points $x_{i_j}\in D^m_{i_j}$ such that $f(x_{i_1})=\cdots = f(x_{i_r})$. Let $z$ denote this common value. Recall that $f_{\vec{i}}$ is the centroid of $f(0_{i_1}), \ldots, f(0_{i_r})$. We have the following inequalities
\[
r\cdot\SafeDistance(f)^2\le \sum_{j=1}^r ||f(0_{i_j})-f_{\vec{i}}||^2 \le \sum_{j=1}^r ||f(0_{i_j})-z||^2\le r\cdot\radius(f)^2
\]
which contradicts the assumption $\radius(f)<\SafeDistance(f)$.
\end{proof}
Next, let us introduce the homotopy between a smooth map $f\colon\coprod_k D^m \to \R^n$ and its linearization. It is a standard tool in the study of embeddings of a disc or a union of discs.
\begin{defin}\label{def:linearization homotopy}
Define the map $\Phi\colon C^\infty(\coprod_k D^m, \R^n)\times [0, 1] \to C^\infty (\coprod_k D^m, \R^n)$ as follows. Let $D^m_i$ denote the $i$-th copy of $D^m$ in $\coprod_k D^m$, let $0_i$ be the center of $D^m_i$ and let $x\in D^m_i$. Then 
\[
\Phi(f, t)(x)=\left\{\begin{array}{cc}  f(0_i)+ \frac{f(xt)-f(0_i)}{t} & t>0 \\[5pt] f(0_i)+f'(0_i)(x) & t=0 \end{array}\right.
\]
\end{defin}
We need to know that $\Phi$ is a continuous function. This is a standard result, but we did not find a detailed proof of it, so for the reader's convenience we include one.
\begin{lemma}\label{lem:linearization homotopy continuous}
The function $\Phi$ of Definition~\ref{def:linearization homotopy} is continuous.
\end{lemma}
\begin{proof}
Continuity at points where $t>0$ really is obvious and is left to the reader. We shall address continuity at points where $t=0$. The weak topology on $C^\infty(\coprod_k D^m, \R^n)$ is first countable, so it is enough to prove that $\Phi$ is sequentially continuous. Suppose we have a sequence $(f_j, t_j)$ in $C^\infty(\coprod_k D^m, \R^n)\times [0, 1]$ converging to $(f, 0)\in C^\infty(\coprod_k D^m, \R^n)\times [0, 1]$. We have to show that $\Phi(f_j, t_j)$ converges to the linearization of $f$. This means that we have to show that $\Phi(f_j, t_j)$ converges uniformly to $f$ on any compact subset of $\coprod_k D^m$, and the same holds for all derivatives of these functions.

Let us first prove convergence on the level of functions themselves. We can use linear Taylor approximation to write, for each $j$ and $x\in D^m_i$
\[
f_j(x)=f_j(0_i)+f_j'(0_i)(x)+E_j(x)
\]
where $E_j$ is the error term. It follows that 
\begin{equation}\label{eq:Philinearization}
\Phi(f_j, t_j)(x)=f_j(0_i)+f_j'(0_i)(x) + \frac{E_j(xt_j)}{t_j}
\end{equation}
where by convention $\frac{E_j(xt)}{t}=0$ when $t=0$. 

Let $K\subset \coprod_k D^m$ be a compact subset. We need to show that $\Phi(f_j, t_j)(x)$ converges to $\Phi(f, 0)(x)$ uniformly in $x$, when $x$ is restricted to $K$. Let us define the constants $M_j, M$ as follows
\[
M_j=\sup_{x\in K}(||f_j''(x)||), \quad M=\sup_{x\in K}(||f''(x)||)
\]
where $||-||$ denotes the operator norm. Since $K$ is compact, $M_j, M$ are finite. Since $f_j$ converges to $f$ in the $C^\infty$ topology, the sequence $M_j$ converges to $M$, and in particular it is bounded. 

By Taylor's theorem for vector-valued functions~\cite[Theorem 5.6.2]{C:calculus} we have the estimate
\[
||E_j(xt)||\le \frac{M_j}{2}||x||^2 t^2 \le\frac{M_j}{2} t^2.
\]
Note that $E_j(xt)\in \R^n$ and $x\in \R^m$, so the two occurrences of $||-||$ denote the Euclidean norm in $\R^n$ and $\R^m$ respectively. It follows that the error estimate $\frac{E_j(xt_j)}{t_j}$ in~\eqref{eq:Philinearization} satisfies the following estimate for all $j$ and $x$:
\begin{equation}\label{eq:estimate}
\left\|\frac{E_j(xt_j)}{t_j}\right\|\le \frac{M_j}{2} t_j.
\end{equation}
Therefore the following holds, where as usual $x\in D^m_i$:
\[
\begin{array}{cl}
\left\|\Phi(f, 0)(x)-\Phi(f_j, t_j)(x)\right\|  & =\left\|f(0_i)+f'(0_i)(x)-\left(f_j(0_i)+f_j'(0_i)(x)+\frac{E_j(xt_j)}{t_j}\right)\right\|\ \\[5pt] & \le \left\|f(0_i)-f_j(0_i)\right\| + \left\|f'(0_i)-f_j'(0_i)\right\| + \frac{M_j}{2} t_j.
\end{array}
\]
Here the $||-||$ sign refers to the euclidean norm in $\R^n$ in $\left\|f(0_i)-f_j(0_i)\right\|$, and to the operator norm in  $ \left\|f'(0_i)-f_j'(0_i)\right\|$. Since $f_j$ converges to $f$ in the weak Whitney  $C^\infty$-topology, and $t_j\xrightarrow{n\to \infty} 0$, and $M_j$ is bounded, it is clear that the right hand side of the inequality converges to zero as $j\to \infty$, independently of $x$. We have proved the convergence on the level of functions.

Now let us look at derivatives. Fix a compact set $K$ as above. It is easy to check that for all $j$ and $x\in D^m_i$, $\Phi(f_j, t_j)'(x)=f_j'(x t_j)$, while $\Phi(f, 0)'(x)=f'(0_i)$. We have an estimate
\[
\left\|f_j'(xt_j)-f_j'(0_i)\right\| \le M_j t_j.
\]
Since $t_j$ converges to $0$, it follows that by taking $j$ large enough we can make $\left\|f_j'(xt_j)-f_j'(0_i)\right\|$ arbitrarily small, for all $x\in K\cap D^m_i$. Since $f_j$ converges to $f$ in the $C^\infty$-topology, we can also make $||f_j'(0_i)-f'(0_i)||$ arbitrarily small. It follows that by taking $j$ large enough we can make $||f_j'(xt_j)-f'(0_j)||=\left\|\Phi(f_j, t_j)'(x)- \Phi(f, 0)'(x)\right\|$ arbitrarily small, which means that $\Phi(f_j, t_j)'$ converges to $\Phi(f, 0)'$ uniformly on $K$.

Finally, suppose $i>1$. It is not hard to check that $\Phi(f_j, t_j)^{(i)}(x)=f_j^{(i)}(x t_j)t_j^{i-1}$ and $\Phi(f, 0)^{(i)}(x)=0$. Since $f_j^{(i)}$ converges to $f^{(i)}$ uniformly on $K$, it follows that $||f_j^{(i)}(x)||$ is uniformly bounded on $K$, and thus $||f_j^{(i)}(x t_j)t_j^{i-1}||$ can be made arbitrarily small by taking $j$ large enough. This proves convergence for higher derivatives.
\end{proof}
We need to define one more invariant of a map of a union of discs. It measures the largest possible radius attained by $f$ during the homotopy $\Phi$. 
\begin{defin}
Let $f\colon \coprod_k D^m \to \R^n$ be a map. Let $D^m_i$ denote the $i$-th copy of $D^m$ in the coproduct. Define $\lr(f)$ by the following formula
\[
\lr(f)=\sup_{1\le i \le k, \, 0\le t\le 1, \, x\in D^m_i}||\Phi(f, t)(x)-f(0_i)||= \sup_{1\le i \le k, \, 0<t\le 1, \, x\in D^m_i} \frac{||f(tx)-f(0_i)||}{t}.
\]
\end{defin}
Just as with $\radius(f)$, $\lr(f)$ can be infinite. Indeed, it always holds that $\radius(f)\le \lr(f)$. But if, for example, $f$ is the restriction of a differentiable function defined on a union of closed discs then $\lr(f)<\infty$. Recall that for $0\le s\le 1$, $f_s$ is defined by the formula $f_s(x)=f(sx)$. It follows that if $f$ is differentiable and $0\le s<1$ then $\lr(f_s)<\infty$. Let us record this simple fact in a lemma.
\begin{lemma}\label{lemma: properties}
Suppose $f$ is differentiable 
\begin{enumerate}
\item Whenever $s<1$, $\lr(f_s)<\infty$. \label{finite}
\item $\lr(f_s)\le s\cdot \lr(f)$.\label{sublinear}
\end{enumerate}
\end{lemma}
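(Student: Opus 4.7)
The plan is to reduce both statements to a single change of variables in the supremum defining $\lr(f_s)$. Since $f_s(tx) = f(stx)$ and $f_s(0_i)=f(0_i)$, substituting $u = st$ in the supremum gives
\[
\lr(f_s) = \sup_{\substack{1\le i\le k \\ 0<t\le 1,\, x\in D^m_i}} \frac{\|f(stx)-f(0_i)\|}{t} \;=\; s\cdot \sup_{\substack{1\le i\le k \\ 0<u\le s,\, x\in D^m_i}} \frac{\|f(ux)-f(0_i)\|}{u}.
\]

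Part~(\ref{sublinear}) would then be immediate: the supremum on the right is taken over $u\in(0,s]\subseteq (0,1]$, hence is bounded above by the analogous supremum over $u\in(0,1]$, which is exactly $\lr(f)$. Multiplying by $s$ yields $\lr(f_s) \le s\cdot \lr(f)$.

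For part~(\ref{finite}), one cannot simply invoke part~(\ref{sublinear}), since $\lr(f)$ may a priori be infinite (for example, $f$ is only assumed smooth on the \emph{open} discs, so its derivative need not be bounded). Instead I would argue by compactness. For $s<1$, as $u$ ranges over $(0,s]$ and $x$ over $D^m_i$, the point $ux$ lies in the open ball of radius $s$, whose closure $\overline{B^m_s}$ is a compact subset of the open disc $D^m$. Since $f$ is smooth, the norm $\|Df\|$ attains a finite maximum $M$ on $\coprod_k \overline{B^m_s}$. The mean value inequality then gives
\[
\|f(ux)-f(0_i)\| \;\le\; M\,\|ux\| \;=\; Mu\|x\| \;\le\; Mu,
\]
so after dividing by $u$ the relevant ratio is bounded by $M$ uniformly in $i$, $u$, and $x$. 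Combined with the displayed identity above, this yields $\lr(f_s)\le sM<\infty$.

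The only mild subtlety, and the only place a reader might expect trouble, is precisely the observation that part~(\ref{finite}) is \emph{not} a formal corollary of part~(\ref{sublinear}); one genuinely needs compactness of $\overline{B^m_s}\subset D^m$ together with smoothness of $f$ to rule out the possibility that $\lr(f)$ itself is infinite. Everything else is a clean change of variables.
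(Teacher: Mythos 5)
Your proposal is correct and takes essentially the same approach as the paper: part (\ref{sublinear}) is the change of variables $u=st$ in the supremum (the paper writes this as dividing and multiplying by $s$ and then enlarging the range of the scaling parameter from $(0,s]$ to $(0,1]$), and part (\ref{finite}) rests on the compactness of $\overline{B^m_s}\subset D^m$ together with boundedness of $Df$ there, which the paper phrases as ``$f_s$ extends to a union of closed discs'' and then invokes the earlier remark that such restrictions have finite $\lr$. The only trivial omission is the case $s=0$, where the change-of-variables identity (which involves dividing by $s$) does not literally apply; the paper disposes of it separately by noting $f_0$ is constant so $\lr(f_0)=0$.
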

\begin{proof}
(1) The case $s=0$ is trivial, because $f_0$ is constant on each disc, and $\lr(f_0)=0$. For $x \in D_{i}^m$ and $0<s<1$, $sx \in sD_{i}^m$, where $sD_{i}^m$ is the open disc of radius $s$ contained in $D_{i}^m$ whose closure $\overline{sD_{i}^m}$ is also contained in $D_{i}^m$, so it is easy to define an extension of $f_s$ to a union of closed discs.

(2) The case $s=0$ is trivial again. Let $s > 0$. For $x\in D_{i}^m$, $sx \in sD_{i}^m$, and if $0_i$ is the center of the disc $D_{i}^m$, then $0_i$ is also the center of the disc $sD_{i}^m$. By definition, 
\begin{equation*}
\begin{aligned}
\lr(f_s)=\sup_{1\le i \le k, \, 0<t\le 1, \, sx\in sD^m_i} \frac{||f(stx)-f(0_i)||}{t}
& =s\cdot \sup_{1\le i \le k, \, 0<t\le 1, \, sx\in sD^m_i} \frac{||f(stx)-f(0_i)||}{st} \\
& \leq s \cdot \sup_{1\le i \le k, \, 0<t\le 1, \, x\in D^m_i} \frac{||f(tx)-f(0_i)||}{t} \\
& = s\cdot \lr(f)
\end{aligned}
\end{equation*}
\end{proof}
We have the following simple but important observation.
\begin{lemma}\label{lemma: continuous}
Fix an $0<s<1$. Then $\lr(f_s)$ depends continuously on $f$.
\end{lemma}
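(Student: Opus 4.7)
The plan is to control $|\lr(f_s)-\lr(\tilde f_s)|$ by a $C^1$-seminorm of $f-\tilde f$ on the compact set $\coprod_k\overline{sD^m}$; the hypothesis $s<1$ is precisely what makes that set compact and hence the seminorm well-defined and continuous in the weak Whitney $C^\infty$-topology.

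Concretely, I would set $h=f-\tilde f$ and apply the reverse triangle inequality pointwise to the integrands of the two defining suprema to get
\[
|\lr(f_s)-\lr(\tilde f_s)|\;\le\;\sup_{1\le i\le k,\,t\in(0,1],\,x\in D^m_i}\frac{||h(stx)-h(0_i)||}{t}.
\]
Because $|x|<1$ and $st\le s<1$, the segment $[0_i,stx]$ lies entirely in the compact set $\overline{sD^m_i}\subset D^m_i$, so the mean value theorem yields $||h(stx)-h(0_i)||\le st\cdot\sup_{\xi\in\overline{sD^m_i}}||dh(\xi)||$. Dividing by $t$ and taking the supremum over $(i,t,x)$ then produces
\[
|\lr(f_s)-\lr(\tilde f_s)|\;\le\; s\cdot\max_{1\le i\le k}\sup_{\xi\in\overline{sD^m_i}}||d(f-\tilde f)(\xi)||.
\]

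The right-hand side is a continuous seminorm on $C^\infty(\coprod_k D^m,\R^n)$ in the weak Whitney $C^\infty$-topology, since $\coprod_k\overline{sD^m}$ is compact for $s<1$. Hence $\tilde f\to f$ forces the right-hand side to zero, giving continuity of $f\mapsto\lr(f_s)$.

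The only delicate point is the apparent $1/t$ singularity as $t\to 0^+$ in the defining supremum for $\lr$; it is the mean value step that handles it, with the factor $|stx|\le st$ cancelling the $1/t$ exactly and producing a clean $C^1$-bound. This cancellation is also the conceptual reason why $\lr(f_s)$ remains finite for each $s<1$, as recorded in Lemma~\ref{lemma: properties}.
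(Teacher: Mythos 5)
Your argument is correct, and it takes a genuinely different and shorter route than the paper. The paper proves sequential continuity directly: for a convergent sequence $f_n\to f$ it introduces the auxiliary functions $g_n(x,t)$ and $g(x,t)$ (extended to $t=0$ via the derivative), and shows $g_n\to g$ uniformly by splitting into two regimes — near $t=0$ it invokes Taylor's theorem with a controlled remainder, and away from $t=0$ it uses uniform convergence of $(f_n)_s$ on the compact set $\coprod_k \overline{s D^m}$ — before concluding that the suprema converge. You instead bound the difference of suprema directly: using $|\sup A - \sup B|\le\sup|A-B|$ together with the reverse triangle inequality on $\R^n$, then applying the mean value inequality along the segment $[0_i, stx]\subset\overline{sD^m_i}$, you obtain the one-line Lipschitz estimate $|\lr(f_s)-\lr(\tilde f_s)|\le s\cdot\max_i\sup_{\overline{sD^m_i}}\|d(f-\tilde f)\|$, whose right-hand side is one of the defining seminorms of the weak $C^\infty$-topology. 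This sidesteps the $t$-case-split entirely (the factor $|stx|\le st$ from the mean value inequality cancels the $1/t$ uniformly in $t$), and it yields a quantitatively stronger conclusion — local Lipschitz continuity of $f\mapsto\lr(f_s)$, not merely sequential continuity. The only small point worth making explicit is that passing from the two one-sided inequalities $\sup A\le\sup B+\sup|A-B|$ and $\sup B\le\sup A+\sup|A-B|$ to the absolute-value bound uses finiteness of $\lr(f_s)$ and $\lr(\tilde f_s)$, which holds for $s<1$ by Lemma~\ref{lemma: properties}(\ref{finite}).
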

\begin{proof}
For a fixed $f\in C^\infty(\coprod_k D^m, \R^n)$, let us define the function $\Psi_f\colon (\coprod_k D^m)\times [0, 1]\to \R^n$ as follows: for $x\in D^m_i$ and $0\le t \le 1$
\[
\Psi_f(x, t)=\Phi(f, t)(x) - f(0_i)=\left\{\begin{array}{cc} \frac{f(xt)-f(0_i)}{t} & t>0 \\ f'(0_i)(x) & t=0 \end{array}\right.
\]
Suppose we have a sequence $f_j\in C^\infty(\coprod_k D^m, \R^n)$, converging to $f$ in the usual $C^\infty$-topology, and we fix an $s<1$. We claim that in this case $\Psi_{(f_j)_s}$ converges uniformly to $\Psi_{f_s}$. Since $\lr(f)=\sup(\Psi_f)$, it follows that the sequence $\lr((f_j)_s)$ converges to $\lr(f_s)$, which is what we want to prove.

It remains to prove the claim. Since the sequence $f_j$ converges to $f$, and $s<1$, it follows that the sequence $(f_j)_s$ converges uniformly to $f_s$ on the entire space $\coprod_k D^m$, and same holds for all derivatives. It follows easily that for any $\delta>0$, $\Psi_{(f_j)_s}$ converges uniformly to $\Psi_{f_s}$ on the space $\left(\coprod_k D^m\right) \times [\delta, 1]$. To establish convergence near $t=0$, we use the estimates in the proof of Lemma~\ref{lem:linearization homotopy continuous}. Applying formula~\eqref{eq:Philinearization}, we can write the following:
\[
\Psi_{(f_j)_s}(x, t)-\Psi_{f_s}(x, t) =(f_j)_s'(0_i)(x) + \frac{E_j(xt)}{t} - \left(f_s'(0_i)(x) + \frac{E(xt)}{t}\right).
\]
Here $E$ is the error term for the linear approximation of $f$. Defining $M_j$ and $M$ as in the proof of Lemma~\ref{lem:linearization homotopy continuous}, and using inequality~\eqref{eq:estimate}, we get the following inequalities
\[
\begin{array}{cl}
\left\|\Psi_{(f_j)_s}(x, t)-\Psi_{f_s}(x, t)\right\| & \le \left\| (f_j)_s'(0_i)(x) - f_s'(0_i)(x)\right\| + \left\|\frac{E_j(xt)}{t}\right\| + \left\|\frac{E(xt)}{t}\right\| \\[5pt]
& \le \left\|(f_j)_s'(0_i) - f_s'(0_i)\right\| + \frac{M_j}{2}t + \frac{M}{2}t.
\end{array}
\]
Note that in the last line $\left\|(f_j)_s'(0_i) - f_s'(0_i)\right\|$ denotes the operator norm of $(f_j)_s'(0_i) - f_s'(0_i)$. 

Since $(f_j)_s'(0_i)$ converges to $f_s'(0_i)$, and the sequence $M_j$ is bounded, it is clear that for all $\epsilon>0$ we can find a $\delta>0$ and an integer $j_1$, such that for all $0\le t\le \delta$ and $j>j_1$ the terms 
$||(f_j)_s'(0_i) - f_s'(0_i)||$, $\frac{M_j}{2}t$ and $\frac{M}{2}t$ are each smaller than $\frac{\epsilon}{3}$. It follows that for all $j>j_1$ and $(x, t)\in \coprod_k D^m \times [0, \delta]$, $\left\|\Psi_{(f_j)_s}(x, t)-\Psi_{f_s}(x, t)\right\|<\epsilon$.

We also can find an $j_2$ such that for all $j>j_2$ and $(x, t)\in \coprod_k D^m \times [\delta, 1]$ it holds
$
||\Psi_{(f_j)_s}(x, t)-\Psi_{f_s}(x, t)|| <\epsilon
$. Thus for all $j>\max(j_1, j_2)$ the inequality $
||\Psi_{(f_j)_s}(x, t)-\Psi_{f_s}(x, t)|| <\epsilon
$ holds for all $x$ and $t$. This means that $\Psi_{(f_j)_s}$ converges uniformly to $\Psi_{f_s}$, and we have proved the claim.
\end{proof}


\begin{defin}
We say that a function $f\colon \coprod_k D^m \to \R^n$ is small, if $\lr(f)<\SafeDistance(f)$. Let 
\[
\smallrImm\left(\coprod_k D^m, \R^n\right)\subset \rImmce\left(\coprod_k D^m, \R^n\right)
\]
denote the subspace consisting of $r$-immersions that are componentwise embeddings and are small.
\end{defin}
Note that if $f$ is small then $\Phi(f, t)$ satisfies the hypothesis of Lemma~\ref{lemma:nonoverlapping} for all $t$, and thus $\Phi(f, t)$ is non-$r$-overlapping for all $t$.
\begin{prop}
The inclusion $\smallrImm\left(\coprod_k D^m, \R^n\right)\hookrightarrow \rImmce\left(\coprod_k D^m, \R^n\right)$ is a homotopy equivalence.
\end{prop}
\begin{proof}
Let us define the function $\alpha\colon \rImmce\left(\coprod_k D^m, \R^n\right)  \to (0, 1)$ by the formula
\[
\alpha(f)=\min\left(\frac{1}{2}, \frac{\SafeDistance(f)}{4\lr(f_{\!\frac{1}{2}})}\right).
\]
Notice that $\lr(f_{\!\frac{1}{2}})<\infty$ by Lemma~\ref{lemma: properties} \eqref{finite}, and therefore $\alpha(f)$ is a well-defined positive number smaller than $1$. Also notice that $\SafeDistance(f)$ is obviously continuous in $f$, and $\lr(f_{\!\frac{1}{2}})$ is continuous by Lemma~\ref{lemma: continuous}. Therefore $\alpha$ is a continuous function.

Next, let $j_f\colon [0, 1]\to [\alpha(f), 1]$ be the canonical linear homeomorphism. Let $j$ be the function
\[
j\colon \rImmce\left(\coprod_k D^m, \R^n\right) \times [0, 1] \to [0, 1]
\]
defined by the formula $j(f, t)=j_f(t)$, then $j$ is continuous in both $f$ and $t$.

Now let us define a homotopy 
\[
H\colon \rImmce\left(\coprod_k D^m, \R^n\right) \times [0, 1] \to \rImmce\left(\coprod_k D^m, \R^n\right)
\]
by the formula $H(f, t)=f_{j_f(t)}$. Since $j_f$ is continuous in $f$ and $t$, $H$ is continuous. Since $j_f(1)=1$ and $j_f(0)=\alpha(f)$, $H$ is a homotopy between the identity function on $\rImmce\left(\coprod_k D^m, \R^n\right)$ and the function that sends $f$ to $f_{\alpha(f)}=f_{\min(\frac{1}{2}, \frac{\SafeDistance(f)}{4\lr(f_{\!\frac{1}{2}})})}$.

Let us check that $f_{\min(\frac{1}{2}, \frac{\SafeDistance(f)}{4\lr(f_{\!\frac{1}{2}})})}$ is small. This means to check that 
\[
\lr(f_{\min(\frac{1}{2}, \frac{\SafeDistance(f)}{4\lr(f_{\!\frac{1}{2}})})})\le \SafeDistance(f_{\min(\frac{1}{2}, \frac{\SafeDistance(f)}{4\lr(f_{\!\frac{1}{2}})})}).
\] 
Note that $\SafeDistance(f)$ only depends on the images of the centers of $D^m$s under $f$, and therefore $\SafeDistance(f_s)=\SafeDistance(f)$ for any $s$. So we need to prove that $\lr(f_{\min(\frac{1}{2}, \frac{\SafeDistance(f)}{4\lr(f_{\!\frac{1}{2}})})})\le \SafeDistance(f)$.

Suppose first that $\frac{\SafeDistance(f)}{4\lr(f_{\!\frac{1}{2}})}\le \frac{1}{2}$. Then we have the inequalities (here we use Lemma~\ref{lemma: properties} \eqref{sublinear})
\[
\lr(f_{\min(\frac{1}{2}, \frac{\SafeDistance(f)}{4\lr(f_{\!\frac{1}{2}})})})=\lr(f_{\frac{\SafeDistance(f)}{4\lr(f_{\!\frac{1}{2}})}})\le \frac{\SafeDistance(f)}{2\lr(f_{\!\frac{1}{2}})}\lr(f_{\frac{1}{2}})=\frac{\SafeDistance(f)}{2}<\SafeDistance(f).
\]
Now suppose that $\frac{1}{2}\le \frac{\SafeDistance(f)}{4\lr(f_{\!\frac{1}{2}})}$. Then we have the inequality
$\lr(f_{\!\frac{1}{2}})\le \frac{\SafeDistance(f)}{2}$ and
\[
\lr(f_{\min(\frac{1}{2}, \frac{\SafeDistance(f)}{4\lr(f_{\!\frac{1}{2}})})})=\lr(f_{\frac{1}{2}})\le \frac{\SafeDistance(f)}{2}<\SafeDistance(f).
\]
We have shown that $H(f, 0)$ is small for every $f$. It is clear that if $f$ is small, then $H(f, t)$ is small for all $t$. We have shown that $H$ induces a homotopy between the identity map on $\rImmce\left(\coprod_k D^m, \R^n\right)$ and a map $\rImmce\left(\coprod_k D^m, \R^n\right)\to \smallrImm\left(\coprod_k D^m, \R^n\right)$, which serves as a homotopy inverse to the inclusion.
\end{proof}
The next step is to show that the space of small $r$-immersions that are componentwise embeddings is equivalent to the space of small $r$-immersions that are componentwise affine.
\begin{defin}
Let $\affImm\left(\coprod_k D^m, \R^n\right) \subset \smallrImm\left(\coprod_k D^m, \R^n\right)$ be the subspace consisting of $r$-immersions that are affine on each component (and are small).
\end{defin}
\begin{prop}\label{prop: small to linear}
The space $\affImm\left(\coprod_k D^m, \R^n\right)$ is a deformation retract of $\smallrImm\left(\coprod_k D^m, \R^n\right)$.
\end{prop}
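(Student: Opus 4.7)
The plan is to build an explicit strong deformation retraction. On the $i$-th disc $D^m_i$ (with center $0_i$), for $f \in \smallrImm\left(\coprod_k D^m, \R^n\right)$ and $t \in [0, 1]$, I would set
\[
H(f, t)(x)= \begin{cases} \dfrac{f(tx) - f(0_i)}{t} + f(0_i), & 0 < t \le 1, \\[4pt] Df(0_i)(x) + f(0_i), & t = 0. \end{cases}
\]
Geometrically, $H(f, t)$ rescales each disc inward by the factor $t$ and magnifies its image in $\R^n$ around $f(0_i)$ by $1/t$, so as $t \to 0$ each component is replaced by its affine linearization at $0_i$. One checks immediately that $H(f, 1) = f$, that $H(f, 0)$ is affine on each component (hence lies in $\linrImm\left(\coprod_k D^m, \R^n\right)$), and that $H(f, t) = f$ for all $t$ if $f$ is already componentwise affine, since $(f(tx) - f(0_i))/t = Df(0_i)(x)$ in that case. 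Thus $H$ restricts to the identity on the retract.

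Next I would verify that $H(f, t)$ stays inside $\smallrImm$ for every $t$. Because $H(f, t)(0_i) = f(0_i)$, the quantity $\SafeDistance(H(f, t)) = \SafeDistance(f)$ is constant in $t$. The substitution $u = ts$ in the defining supremum for $\lr$ gives $\lr(H(f, t)) \le \lr(f)$ for $t \in (0, 1]$, and letting $s \to 0$ in the definition of $\lr(f)$ yields $\lr(H(f, 0)) = \max_i \| Df(0_i) \|_{\mathrm{op}} \le \lr(f)$. Hence $\lr(H(f, t)) < \SafeDistance(H(f, t))$, so $H(f, t)$ is small. For the componentwise embedding property, $H(f, t)|_{D^m_i}$ equals $f|_{tD^m_i}$ conjugated by an affine isomorphism of $\R^n$ when $t > 0$, and equals an injective affine map when $t = 0$ (using that $Df(0_i)$ is injective because $f$ is an immersion). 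Smallness plus componentwise embedding implies the non-$r$-overlapping condition, and the derivative of $H(f, t)$ at $x$ equals $Df(tx)$, respectively $Df(0_i)$ at $t = 0$, so the immersion condition holds and $H(f, t)$ is an $r$-immersion.

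The main obstacle is $C^\infty$-continuity of $H$, particularly at $t = 0$. At points with $t > 0$ continuity is immediate because $H(f, t)(x)$ is a smooth composition in $(f, t, x)$. At $t = 0$ I would adapt the Taylor-expansion strategy of Lemma~\ref{lemma: continuous}: for any sequence $f_n \to f$ in $C^\infty$ and $t_n \to 0$, write $f_n(t_n x) = f_n(0_i) + Df_n(0_i)(t_n x) + R_n(x, t_n)$ with $|| R_n(x, t_n) || \le M_n t_n^2$ on compact sets, where the constant $M_n$ is controlled by the second derivatives of $f_n$ near $0_i$. Since $C^\infty$-convergence of $f_n$ to $f$ makes the $M_n$ locally uniformly bounded and $Df_n(0_i) \to Df(0_i)$, the difference quotient $(f_n(t_n x) - f_n(0_i))/t_n$ converges to $Df(0_i)(x)$ uniformly on compacts; applying the same Taylor bound to each higher partial derivative in $x$ yields uniform $C^\infty$-convergence of $H(f_n, t_n)$ to $H(f, 0)$, which is the continuity needed to complete the proof.
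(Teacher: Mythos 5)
Your proof is correct and takes essentially the same approach as the paper: you construct the identical homotopy $H(f,t)(x)$ (inward rescaling by $t$ composed with magnification by $1/t$ about $f(0_i)$, linearizing at $t=0$) and check the same invariance properties. You simply flesh out the smallness estimate for $\lr(H(f,t))$ and the $C^\infty$-continuity at $t=0$, which the paper disposes of with a one-line appeal to the definition of the Whitney topology.
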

\begin{proof}
Recall the map $\Phi\colon C^\infty(\coprod_k D^m, \R^n)\times [0, 1] \to C^\infty (\coprod_k D^m, \R^n)$ from Definition~\ref{def:linearization homotopy}. 
It is easy to check from the definitions that  
\begin{enumerate}
\item $\Phi$ restricts to a map $\smallrImm\left(\coprod_k D^m, \R^n\right) \times [0, 1] \to \smallrImm\left(\coprod_k D^m, \R^n\right)$
\item If $f$ is affine on each component, meaning that $f(x)=f(0_i)+f'(0_i)(x)$ for all $x\in D_i^m$, then it's easily seen that $\Phi(f,t)(x)=f(x)$ for all $t$. So the homotopy $\Phi$ is constant on $\affImm\left(\coprod_k D^m, \R^n\right)$.
\item For every $f$ the function $x\mapsto \Phi(f, 0)(x)$ is affine on each component.
\end{enumerate}
It follows that $\Phi$ defines a deformation retraction of $ \smallrImm\left(\coprod_k D^m, \R^n\right)$ onto $\affImm\left(\coprod_k D^m, \R^n\right)$.
\end{proof}
Next, we can prove that the evaluation map restricted to $\affImm\left(\coprod_k D^m, \R^n\right)$ is an equivalence.
\begin{lemma}
The map 
\[
\ev \colon \affImm\left(\coprod_k D^m, \R^n\right)\to 
\rConf(k, \R^n)\times \inj(\R^m, \R^n)^k
\]
defined by evaluating at the center of each disc $D^m$ and also differentiating at the center of each disc, is a homotopy equivalence.
\end{lemma}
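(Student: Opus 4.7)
The plan is to identify $\linrImm\!\left(\coprod_k D^m, \R^n\right)$ with an explicit open subspace $U$ of $\rConf(k, \R^n) \times \inj(\R^m, \R^n)^k$ via $\ev$, and then show that the inclusion $U \hookrightarrow \rConf(k, \R^n) \times \inj(\R^m, \R^n)^k$ is a homotopy equivalence by a simple scaling retraction. A componentwise linear map is determined uniquely by the pair $((x_i), (L_i)) = ((f(0_i)), (D_{0_i} f))$, the inverse assignment being $f(y) = x_i + L_i(y)$ on the $i$-th disc. Because for componentwise linear $f$ only the zeroth and first derivatives are nontrivial, this bijection is a homeomorphism in the Whitney $C^\infty$-topology. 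One checks directly that $\lr(f) = \max_i \|L_i\|$ (operator norm) while $\SafeDistance(f)$ depends only on the centers; writing $\SafeDistance((x_i))$ for this common value, the smallness condition $\lr(f) < \SafeDistance(f)$ becomes the single inequality $\max_i \|L_i\| < \SafeDistance((x_i))$. Thus $\ev$ restricts to a homeomorphism from $\linrImm\!\left(\coprod_k D^m, \R^n\right)$ onto
\[
U = \bigl\{((x_i), (L_i)) \in \rConf(k, \R^n) \times \inj(\R^m, \R^n)^k : \max_i \|L_i\| < \SafeDistance((x_i))\bigr\}.
\]

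To show $U$ is a deformation retract of the ambient product, introduce the continuous, strictly positive function
\[
\phi((x_i), (L_i)) = \min\!\left(1, \frac{\SafeDistance((x_i))}{2 \max_i \|L_i\|}\right),
\]
well-defined because each injective $L_i$ is nonzero and $\SafeDistance$ is continuous and strictly positive on $\rConf(k, \R^n)$. Set $\tau((x_i), (L_i)) = ((x_i), (\phi L_i))$; the estimate $\phi \max_i \|L_i\| \leq \SafeDistance((x_i))/2$ shows that $\tau$ lands in $U$. Consider the straight-line homotopy
\[
H_t((x_i), (L_i)) = \bigl((x_i),\, \bigl(((1-t)\phi + t)\, L_i\bigr)\bigr).
\]
Since $(1-t)\phi + t > 0$ and positive scalings preserve injectivity, $H_t$ stays inside $\rConf(k, \R^n) \times \inj(\R^m, \R^n)^k$, and it connects $\tau$ at $t = 0$ to the identity at $t = 1$. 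Restricted to $U$, the scaling factor $(1-t)\phi + t$ lies in $[\phi, 1]$, whose maximum at $t = 1$ satisfies the smallness bound by the defining condition of $U$, so $H_t$ also remains in $U$ throughout. Hence $\tau$ is a homotopy inverse to the inclusion $U \hookrightarrow \rConf(k, \R^n) \times \inj(\R^m, \R^n)^k$, and $\ev$ is therefore a homotopy equivalence.

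I do not anticipate any genuine obstacle here: once the problem has been reduced to componentwise linear $r$-immersions by \refP{prop: small to linear} and its predecessors, the substantive content is the identity $\lr(f) = \max_i \|L_i\|$, which transparently reinterprets smallness as a linear scaling inequality, after which the retraction onto $U$ is essentially canonical. The one care-point is making sure $\phi$ is continuous and bounded away from zero on the entire ambient product, which is why it is normalized by $\max_i \|L_i\|$ (not by an individual $\|L_i\|$, which could be small relative to the others) and capped above by $1$.
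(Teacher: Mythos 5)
Your proof is correct and takes the same route as the paper: identify $\linrImm\!\left(\coprod_k D^m, \R^n\right)$ via $\ev$ with the open subset of $\rConf(k,\R^n)\times\inj(\R^m,\R^n)^k$ cut out by $\max_i\|L_i\|<\SafeDistance((x_i))$, then retract the ambient product onto it by scaling the linear parts. The paper leaves the retraction as a one-line remark; you simply make the scaling function $\phi$ and the straight-line homotopy explicit (your closing claim that $\phi$ is "bounded away from zero" on the whole product is not true and not needed — pointwise positivity and continuity suffice).
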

\begin{proof}
For affine maps, the smallness condition amounts to the following inequality, that has to hold for each $i$ between $1$ and $k$:
\[
||f'(0_i)||\le \SafeDistance(f).
\]
Here $||f'(0_i)||$ denotes the operator norm of $f'(0_i)$. It is easy to show that the space $\rConf(k, \R^n)\times \inj(\R^m, \R^n)^k$ deformation retracts onto the image of small affine $r$-immersions. One has to multiply the linear transformations from $\R^m$ to $\R^n$ by a factor that will make their norms smaller than $\SafeDistance(f)$.
\end{proof}
Finally we can prove the main result.
\begin{proof}[Proof of Theorem~\ref{theorem: rimm to config}]
We have constructed the following composition of maps
\begin{multline*}
\affImm\left(\coprod_k D^m, \R^n\right) \hookrightarrow \smallrImm\left(\coprod_k D^m, \R^n\right) \hookrightarrow \rImmce\left(\coprod_k D^m, \R^n\right)\hookrightarrow \\ \hookrightarrow \rImm\left(\coprod_k D^m, \R^n\right) \xrightarrow{\ev} \rConf(k, \R^n)\times \inj(\R^m, \R^n)^k.
\end{multline*}
We have shown that each one of the inclusions is an equivalence, and that the composition is an equivalence. It follows that the map marked $\ev$ is an equivalence.
\end{proof}
\begin{cor}
Choose a basepoint in $\Imm\left(\coprod_k D^m, \R^n\right)$, and let $\rbar\left(\coprod_k D^m, \R^n\right)$ be the homotopy fiber of the map $\rImm\left(\coprod_k D^m, \R^n\right)\to \Imm\left(\coprod_k D^m, \R^n\right)$. Then there exists an equivalence
$$\rbar\left(\coprod_k D^m, \R^n\right) \simeq \rConf(k, \R^n).$$
\end{cor}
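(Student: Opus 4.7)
The plan is to compare the forgetful map $\rImm \to \Imm$ with a much simpler map between products, and then exploit that $(\R^n)^k$ is contractible.

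The first step would be to establish the analog of Theorem~\ref{theorem: rimm to config} for plain immersions, namely that the evaluation-differentiation map
\[
\Imm\!\left(\coprod_k D^m, \R^n\right) \xrightarrow{\simeq} (\R^n)^k \times \inj(\R^m, \R^n)^k
\]
is a weak equivalence. The chain of deformation retractions used in the proof of Theorem~\ref{theorem: rimm to config} adapts almost verbatim: the scaling $f\mapsto f_t$ preserves being an immersion for every $t>0$, the linearization step of Proposition~\ref{prop: small to linear} (sending $f$ to $f(0_i) + D_{0_i}f(x)$) goes through unchanged, and the $\SafeDistance$/smallness machinery is simply not needed since there is no non-$r$-overlapping condition to enforce. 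Alternatively, this follows from Smale--Hirsch together with contractibility of $D^m$ and the identification $\Imm(\coprod_k D^m, \R^n) \cong \Imm(D^m, \R^n)^k$.

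Second, I would fit the forgetful map into the commutative square
\[
\begin{CD}
\rImm\!\left(\coprod_k D^m, \R^n\right) @>>> \Imm\!\left(\coprod_k D^m, \R^n\right) \\
@VV{\ev}V @VV{\ev}V \\
\rConf(k, \R^n) \times \inj(\R^m,\R^n)^k @>{\iota\,\times\,\mathrm{id}}>> (\R^n)^k \times \inj(\R^m,\R^n)^k
\end{CD}
\]
whose verticals are weak equivalences (by Theorem~\ref{theorem: rimm to config} and step one), and whose bottom row is the inclusion $\iota\colon \rConf(k,\R^n)\hookrightarrow (\R^n)^k$ on the first factor and the identity on the second. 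Since homotopy fibers are preserved by weak equivalences of arrows, $\rbar$ is equivalent to the homotopy fiber of $\iota\times\mathrm{id}$ over the image of the chosen basepoint.

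Finally, the homotopy fiber of a product of maps is the product of the individual homotopy fibers; the identity contributes a contractible fiber, while $\iota$ lands in the contractible space $(\R^n)^k$, so its homotopy fiber over any basepoint is equivalent to $\rConf(k,\R^n)$ via the canonical projection onto the source. Tracing this through the square identifies the resulting equivalence $\rbar\simeq\rConf(k,\R^n)$ with the composition displayed in the statement. The main obstacle is the immersion analog in step one, which is asserted without detailed proof in the literature; once that is granted, the remaining work is a formal homotopy-fiber manipulation.
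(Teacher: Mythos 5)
Your proof is correct. The paper states this corollary without proof, so there is no argument in the source to compare against; what you have written is the natural argument, and it is essentially forced once one knows that $\Imm(\coprod_k D^m,\R^n)\simeq (\R^n)^k\times\inj(\R^m,\R^n)^k$ by the same evaluation-differentiation map. Two small remarks. First, the immersion analogue in your step one is easier than you suggest: the linearization homotopy $H(f,t)(x)=f(0_i)+\tfrac{f(tx)-f(0_i)}{t}$ of Proposition~\ref{prop: small to linear} preserves the immersion condition for all $t\ge 0$ with no smallness hypothesis, so it already deformation-retracts $\Imm(\coprod_k D^m,\R^n)$ onto its componentwise-linear subspace, which is homeomorphic to $(\R^n)^k\times\inj(\R^m,\R^n)^k$; neither the componentwise-embedding step nor Smale--Hirsch is needed. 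Second, one can streamline the final bookkeeping by observing that, after composing with the contraction of $(\R^n)^k$, the bottom arrow of your square becomes (up to homotopy) the projection $\rConf(k,\R^n)\times\inj(\R^m,\R^n)^k\to\inj(\R^m,\R^n)^k$, a trivial fibration with fiber $\rConf(k,\R^n)$; this makes the identification of the equivalence with the stated composition immediate without the product-of-homotopy-fibers lemma. Both of these are simplifications, not corrections --- your argument as written is sound.
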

\end{section}

\bibliographystyle{alpha}


\pagestyle{plain}

\end{document}